\numberwithin {equation}{section}
\newtheorem{theorem}{Theorem}[section]
\newtheorem{lemma}[theorem]{Lemma}
\newtheorem{example}[theorem]{Example}
\newtheorem{definition}[theorem]{Definition}
\newtheorem{remark}[theorem]{Remark}
\begin{document}
\setcounter{section}{0}

\begin{center}
\Large{\textbf{On Adjoint Additive Processes.}}
\end{center}
\begin{center}
\large{Kristian P. Evans \ \ \ Niels Jacob}
\end{center}
\begin{center}
Mathematics Department, \\ Swansea University, \\ Singleton Park, \\  Swansea, SA2 8PP.\\ U.K.
\end{center}

\section*{Abstract}
Starting with an additive process $(Y_t)_{t\geq0}$, it is in certain cases possible to construct an adjoint process $(X_t)_{t\geq0}$ which is itself additive. Moreover, assuming that the transition densities of $(Y_t)_{t\geq0}$ are controlled by a natural pair of metrics $\mathrm{d}_{\psi,t}$ and $\delta_{\psi,t}$, we can prove that the transition densities of $(X_t)_{t\geq0}$ are controlled by the metrics $\delta_{\psi,1/t}$ replacing $\mathrm{d}_{\psi,t}$ and $\mathrm{d}_{\psi,1/t}$ replacing $\delta_{\psi,t}$.

\vskip10pt\noindent \textbf{AMS Subject Classification:} 60J30, 60J35, 60E07, 60E10, 47D03, 47D06
\newline \textbf{Abbreviated Title:} Adjoint Additive Processes
\newline \textbf{Keywords:} Additive processes; L\'evy processes, adjoint densities, transition functions, metric measure spaces.
\section*{Introduction}
The origin of this investigation is the paper \cite{J2} where it was suggested to understand the transition density $p_t(x)$ of a symmetric L\'evy process $(Y_t)_{t\geq 0}$ with characteristic exponent $\psi$ in terms of two in general $t$-dependent metrics $d_{\psi,t}=\sqrt{t}\,\mathrm{d}_\psi$, where $d_\psi(\xi,\eta)=\psi^\frac12(\xi-\eta)$, and $\delta_{\psi,t}$, i.e.,
\begin{equation}\label{EQ0.1}
p_t(x-y)=p_t(0)e^{-\delta_{\psi,t}^2(x,y)}
\end{equation}
and
\begin{equation}\label{EQ0.2}
p_t(0)=(2\pi)^{-n}\int_0^\infty\lambda^{(n)}(B^{\mathrm{d}_\psi}(0,\sqrt{r/t}))e^{-r}\,\mathrm{d}r.
\end{equation}
The term \eqref{EQ0.2} has already been considered in \cite{K1}. While the metric $\mathrm{d}_{\psi,t}$ is, under mild conditions, always at our disposal, the existence of $\delta_{\psi,t}$ is in general an open problem. Examples in \cite{J2} suggest that in some cases $x\mapsto\delta^2_{\psi,t}(x,0)$ for $t>0$ fixed is itself the characteristic exponent of a L\'evy process, i.e. a continuous negative definite function, and that $(t,x)\mapsto\delta^2_{\psi,1/t}(x,0)$ is the characteristic exponent of an additive process $(X_t)_{t\geq 0}$. An example is of course Brownian motion, a further one is the Cauchy process $(Y_t)_{t\geq 0}$ where the corresponding additive process $(X_t)_{t\geq 0}$ is the Laplace process. In \cite{B4}, the relations between the transition densities of $(Y_t)_{t\geq0}$ and $(X_t)_{t\geq0}$ were studied in more detail when $(Y_t)_{t\geq0}$ is a L\'evy process and when $(X_t)_{t\geq0}$ exists, i.e. $x\mapsto\delta_{\psi,t}^2(x,0)$ is a continuous negative definite function and $\delta^2_{\psi,1/t}(x,0)$ is the characteristic exponent of an additive process. A natural question is whether it is possible to already start with an additive process $(Y_t)_{t\geq0}$ with generator $-q(t,D)$, where $q(t,D)$ is a pseudo-differential operator with symbol $q(t,\xi)$, and for $t>0$ fixed $\xi\mapsto q(t,\xi)$ is the characteristic exponent of a L\'evy process, and to obtain a new additive process $(X_t)_{t\geq0}$ similar to the construction when starting with a L\'evy process. Additive processes can be traced back to P. L\'evy and this notion was  further clarified by K. It\^o as well as A.V. Skorohod, we refer to the notes in \cite{S1}.
\vskip10pt\noindent While pursuing these ideas, we learned about the work initiated by T. Lewis \cite{L2} who was (to the best of our knowledge) the first to consider probability distributions which are characteristic functions themselves. Such distributions he called adjoint. In the monograph \cite{L1}, adjoint distributions were discussed in more detail. Thus in light of these investigations and the discussion in \cite{J2} and \cite{B4}, we consider our paper as a further step to understand adjoint additive processes with densities $\Phi_t$. Here we call $(X_t)_{t\geq0}$ adjoint to $(Y_t)_{t\geq0}$ if there exists a mapping $j:(0,\infty)\to(0,\infty)$ such that for all $t\in(0,\infty)$ we have
\begin{equation}
\hat{p}_t=\Phi_{j(t)},
\end{equation}
where $\hat{p}_t$ is the Fourier transform of $p_t$. Often $j(t)=\frac1t$ will be a suitable choice. 
\vskip10pt\noindent Our approach is essentially an analytic one, namely to construct, with the help of  $p_t$, a symbol of an operator $A(t,D)$ which admits a fundamental solution such that this fundamental solution allows us to construct the transition densities $\Phi_t$ of an additive process. Given $p_t$, with $\sigma_t(\xi):=\frac{p_{1/t}(\xi)}{p_{1/t}(0)}$ we have to take $A(t,\xi)=-\frac{\partial}{\partial t}\ln\sigma_t(\xi)$. Beside some more or less standard technical assumptions we need the crucial, but restrictive \textbf{Basic Assumption I}: $\xi\mapsto A(t,\xi)$ is a continuous negative definite function, i.e. for fixed $t>0$ it has a L\'evy-Khintchine representation.
\vskip10pt\noindent
We then turn to the question of understanding the structure of transition densities, and for this we add \textbf{Basic Assumption II}: $\mathrm{d}_\psi(\xi,\eta):=\sqrt{\psi(\xi-\eta)}$ is a metric on $\mathbb{R}^n$ generating the Euclidean topology and $(\mathbb{R}^n,\mathrm{d}_\psi,\lambda^{(n)})$ is a metric measure space having the volume doubling property. Under these two basic assumptions and, as previously mentioned, some standard assumptions on the symbol $q(t,\xi)$ of the generator of the additive process $(Y_t)_{t\geq0}$ we start with, we can show that $(Y_t)_{t\geq0}$ admits an adjoint process $(X_t)_{t\geq 0}$. In addition, with $Q_{t,0}(\xi)=\int_0^tq(\tau,\xi)\,\mathrm{d}\tau$ and $\mathrm{d}_{Q_{t,0}}(\xi,\eta)=Q_{t,0}^\frac12(\xi-\eta)$, we have for the transition density $p_t(x-y)$ of $Y_t$
\begin{equation}
p_t(x-y)=(2\pi)^{-n}\int_{\mathbb{R}^n}\lambda^{(n)}(B^{\mathrm{d}_{Q_t,0}}(0,\sqrt{r}))e^{-r}\,\mathrm{d}r\,e^{-\delta^2_{Q_{t,0}}(x,y)}
\end{equation}
and for the transition density $\Phi_t$ of $X_t$ we find
\begin{equation}
\Phi_t(x-y)=(2\pi)^{-n}\int_{\mathbb{R}^n}\lambda^{(n)}(B^{\delta_{Q_{1/t,0}}}(0,\sqrt{r}))e^{-r}\,\mathrm{d}r\,e^{-d^2_{Q_{1/t,0}}(x,y)}.
\end{equation}
Of importance, of course, are examples and they are provided with the help of the symbols $q_1(t,\xi)=h_1(t)|\xi|^2$, $q_2(t,\xi)=h_2(t)|\xi|$ and $q_3(t,\xi)=h_3(t)\ln\cosh\xi$ (here we require $\xi\in\mathbb{R}$). Clearly certain combinations such as direct sums lead to more examples. As indicated in \cite{J2}, in particular Theorem 7.1, subordination in the sense of Bochner, see \cite{S2} for the general theory, shall lead to further examples. Readers with an interest in state of the art results of the theory of Markov processes related to pseudo-differential operators are referred to Schilling et al. \cite{B3} as well as to F. K\"uhn \cite{K2} and the forthcoming survey \cite{J3}. Whether it is possible to extend our considerations to the classes of processes constructed in \cite{B2} using the symbolic calculus of Hoh \cite{H} and in \cite{Z} using the ideas of \cite{J1} with the help of $x$ and $t$ dependent negative definite symbols remains an open question. 

\section{Adjoint Processes}

Let $(\Omega,\mathcal{A},P^x,(X_t)_{t\geq 0})_{x\in\mathbb{R}^n}$ be a stochastic process (adapted to a suitable filtration). Following K. Sato \cite{S1}, we call $(X_t)_{t\geq0}$ an \textbf{additive process in law} if $(X_t)_{t\geq 0}$ has independent increments and if it is stochastically continuous. If, in addition, the increments are also stationary, we call $(X_t)_{t\geq0}$ a \textbf{L\'evy process}. For the distribution $\gamma_{t,s}$ of the increments $X_t-X_s$, $0\leq s<t$, of an additive process, the following conditions are satisfied:
\begin{align}
&\gamma_{s,s}=\epsilon_0, \quad 0\leq s;\\
&\gamma_{t,r}\ast\gamma_{r,s}=\gamma_{t,s}, \quad 0\leq s\leq r\leq t;\\
&\gamma_{t,s}\to\epsilon_0 \ \text{ weakly for }\ s\to t, s<t;\\
&\gamma_{t,s}\to\epsilon_0 \ \text{ weakly for }\  t\to s, s<t.
\end{align} 

\noindent In the case of a L\'evy process we have $\gamma_{t,s}=\mu_{t-s}$ and $(\mu_t)_{t\geq0}$ is a convolution semi-group of probability measures on $\mathbb{R}^n$, i.e.,
\begin{align*}
&\mu_0=\epsilon_0\\
&\mu_t\ast\mu_s=\mu_{t+s}\\
&\mu_t\to\epsilon_0 \ \text{ weakly as } \ t\to 0.
\end{align*}
A continuous function $\psi:\mathbb{R}^n\to\mathbb{C}$ is called a \textbf{continuous negative definite function} if $\psi(0)\geq0$ and if for all $t>0$ the function $\xi\mapsto e^{-t\psi(\xi)}$ is positive definite in the sense of Bochner. Given a convolution semi-group of probability measures on $\mathbb{R}^n$ then there exists a unique continuous negative definite function $\psi:\mathbb{R}^n\to\mathbb{C}$ such that
\begin{equation}\label{EQ1.5}
\hat{\mu}_t(\xi)=(2\pi)^{-\frac{n}{2}}\int_{\mathbb{R}^n}e^{-ix\cdot\xi}\mu(\mathrm{d}x)=(2\pi)^{-\frac{n}{2}}e^{-t\psi(\xi)}
\end{equation}
holds.
\vskip5pt\noindent A remark about the normalisation of the Fourier transform is in order. Our choice is the common one in the theory of pseudo-differential operators and it has the property that the constant in Plancherel's theorem is equal to 1, i.e. we have $\|\hat{u}\|_0=\|u\|$ for $u\in L^2(\mathbb{R}^n)$ where $\|u\|_0$ denotes the $L^2$-norm of $u$. This is for many of our calculations rather convenient. Probabilists would prefer a different normalisation, either 
$$
\hat{\mu}_t(\xi)=(2\pi)^{-n}\int_{\mathbb{R}^n}e^{-ix\cdot\xi}\mu(\mathrm{d}x)
$$
or 
$$
\hat{\mu}_t(\xi)=\int_{\mathbb{R}^n}e^{ix\cdot\xi}\mu(\mathrm{d}x).
$$
Obviously the main results will be independent of this choice. In our normalisation the convolution theorem reads as 
$$
(\mu_t\ast\mu_s)^{\wedge}(\xi)=(2\pi)^{\frac{n}{2}}\hat{\mu}_t(\xi)\hat{\mu}_s(\xi)
$$
and the inverse Fourier transform is given by
$$
(F^{-1}u)(x)=(2\pi)^{-\frac{n}{2}}\int_{\mathbb{R}^n}e^{ix\cdot\xi}u(\xi)\,\mathrm{d}\xi.
$$
If $\mu_t=p_t(\cdot)\lambda^{(n)}$ then we have of course $\hat{\mu}_t=\hat{p}_t$ and from \eqref{EQ1.5} it follows that 
\begin{align*}
p_t(x)&=F^{-1}(\hat{\mu}_t)(x)=F^{-1}((2\pi)^{-\frac{n}{2}}e^{-t\psi(\cdot)})(x)\\
&=(2\pi)^{-n}\int_{\mathbb{R}^n}e^{ix\cdot\xi}e^{-t\psi(\xi)}\mathrm{d}\xi.
\end{align*}
 Here and in the following, $\hat{\mu}$ denotes the Fourier transform of $\mu$ and $F^{-1}u$ is the inverse Fourier transform of $u$. If the continuous negative definite function $\psi$ is real-valued, the measures $\mu_t$ are symmetric and in this note we are only interested in the symmetric case. Moreover, we do not allow a killing or diffusion part and therefore the L\'evy-Khintchine representation of $\psi$ is given by
\begin{equation}
\psi(\xi)=\int_{\mathbb{R}\setminus\{0\}}(1-\cos(y\cdot\xi))\nu(\mathrm{d}y)
\end{equation}
with L\'evy measure $\nu$.
\vskip5pt\noindent A probability measure $\mu$ on $\mathbb{R}^n$ is called \textbf{infinitely divisible} if for every $k\in\mathbb{N}$ there exists a probability measure $\mu_k$ on $\mathbb{R}^n$ such that
\begin{equation}
\mu=\mu_k\ast\cdots\ast\mu_k\quad \text{($k$-terms)}.
\end{equation}
It is known, see \cite{B1}, that every infinitely divisible measure $\mu$ can be embedded into a convolution semigroup $(\mu_t)_{t\geq 0}$, $\mu_1=\mu$. 
\vskip5pt\noindent Following T. Lewis \cite{L2}, we call a probability distribution $p$ on $\mathbb{R}^n$ \textbf{adjoint} to a probability distribution $\Phi$ if
\begin{equation}
\hat{p}=\Phi.
\end{equation}
We call $p$ \textbf{self-adjoint} if
\begin{equation}
\hat{p}=p,
\end{equation}
i.e. if $p$ is a fixed point of the Fourier transform. Note that at this point the choice of the normalisation of the Fourier transform must be taken into account. Examples of adjoint distributions are, see \cite{L1},
\begin{align*}
p(x)&=\frac{2x}{\pi^2\sinh x}, \quad \Phi(x)=\frac{\pi}{4\cosh\tfrac{\pi x}{2}},\\
p(x)&=\frac1\pi\left(\frac{\sin x}{x}\right)^2, \quad \Phi(x)=\frac12\max(1-\frac{|x|}{2},0),
\end{align*}
and in addition to the normal distribution we find that
\begin{align}
p(x)&=\frac{1}{\sqrt{2\pi}\cosh(\sqrt{\tfrac\pi2}x)},\\
p(x)&=\frac{1}{\sqrt{2\pi}}\frac{\cos(\sqrt{\tfrac\pi2}x)}{\cosh(\sqrt{\pi}x)}
\end{align}
or
\begin{equation}
p_k(x)=C_k(H_{4 k}(\sqrt2 x)-m_{4k})e^{\frac{x^2}{2}},
\end{equation}
where $H_l$ is the $l^\text{th}$ Hermite polynomial, are self-adjoint distributions.
\vskip5pt\noindent If a distribution $p$ has an adjoint distribution $\Phi$ which is infinitely divisible the corresponding convolution semi-group $(\Phi_t)_{t\geq0}$ give rise to a L\'evy process. We call two stochastic processes with distribution $(p_t)_{t\geq0}$ and $(\Phi_t)_{t\geq0}$ \textbf{adjoint processes} if for a bijective mapping $j:(0,\infty)\to(0,\infty)$ we have
$$
\hat{p}_t=\Phi_{j(t)},
$$
where we will often use $j(t)=\frac1t$. One aim of the paper is to study this notion for L\'evy and additive processes.

\section{Some Additive Processes}
In the following, let $q:[0,\infty)\times\mathbb{R}^n\to\mathbb{R}$ be a continuous function such that for every $t\geq 0$ the function $q(t,\cdot):\mathbb{R}^n\to\mathbb{R}$ is a continuous negative definite function. It follows that $q(t,\xi)\geq0$ and for $0\leq s<t$
\begin{equation}
\xi\mapsto\int_s^tq(\tau,\xi)\,\mathrm{d}t
\end{equation}
is a continuous negative definite function too. We assume, in addition, that for a fixed continuous negative definite function $\psi:\mathbb{R}^n\to\mathbb{R}$ we have $\lim_{|\xi|\to\infty}\psi(\xi)=\infty$, $e^{-t\psi}\in L^1(\mathbb{R}^n),$ and for $0<\kappa_0<\kappa$
\begin{equation}\label{EQ2.2}
\kappa_o\nu_0(A)\leq\nu(t,A)\leq\kappa_1\nu_0(A), \ \ A\in\mathcal{B}^{(n)}(\mathbb{R}^n\setminus\{0\})
\end{equation}
 where $\nu_0$ is the L\'evy measure corresponding to $\psi$ and $\nu(t,\mathrm{d}y)$ is the L\'evy measure corresponding to $q(t,\xi)$. We refer to \cite{K1} and \cite{J2} where the condition $e^{-t\psi}\in L^1(\mathbb{R}^n)$ is related to growth conditions of $\psi$ or the doubling property. The estimate \eqref{EQ2.2} induces of course
\begin{equation}\label{EQ2.3}
\kappa_0\psi(\xi)\leq q(t,\xi)\leq\kappa_1\psi(\xi)
\end{equation}
for all $\xi\in\mathbb{R}^n$. Estimates such as \eqref{EQ2.2} or \eqref{EQ2.3} have the interpretation that corresponding pseudo-differential operators have the same continuity properties in an intrinsic scale of generalised Bessel potential spaces. Their origin is of course classical ellipticity estimates. We set
\begin{equation}
Q(t,\xi):=\int_0^tq(\tau,\xi)\mathrm{d}\tau
\end{equation}
and we find
\begin{equation}
\int_s^tq(\tau,\xi)\,\mathrm{d}\tau=Q(t,\xi)-Q(s,\xi)\geq 0
\end{equation}
and by
\begin{equation}
\hat{\mu}_{t,s}(\xi):=(2\pi)^{-\frac{n}{2}}e^{-(Q(t,\xi)-Q(s,\xi))}=(2\pi)^{-\frac{n}{2}}e^{-\int_s^tq(\tau,s)\mathrm{d}\tau}
\end{equation}
a family of probability measures $(\mu_{t,s})_{0\leq s\leq t}$ is defined. From our assumption it follows immediately that
\begin{equation}
\hat{\mu}_{s,s}(\xi)=(2\pi)^{-\frac{n}{2}}=\hat{\epsilon}_0(\xi),
\end{equation}
where $\epsilon_0$ is the Dirac measure at 0, and
\begin{equation}
\mu_{t,r}\ast\mu_{r,s}=\mu_{t,s}, \quad s\leq r\leq t.
\end{equation}
Moreover, we have
\begin{equation}
\lim_{\stackrel{s\to t}{s<t}}\hat{\mu}_{t,s}(\xi)=\hat{\epsilon}_0(\xi)
\end{equation}
and
\begin{equation}
\lim_{\stackrel{t\to s}{s<t}}\hat{\mu}_{t,s}(\xi)=\hat{\epsilon}_0(\xi)
\end{equation}
which implies the corresponding weak convergence of the measures. It follows that the family $(\mu_{t,s})_{0\leq s\leq t}$ forms the family of distributions of the increments of an additive process in law, see \cite{S1}.
\vskip5pt\noindent Moreover, from \eqref{EQ2.3} we deduce that each of the measures $\mu_{t,s}$ has a density with respect to the Lebesgue measure given by
\begin{align*}
p_{t,s}(x)&=(2\pi)^{-n}\int_{\mathbb{R}^n}e^{ix\cdot\xi}e^{-\int_s^t q(\tau,\xi)\,\mathrm{d}\tau}\mathrm{d}\xi\\
&=(2\pi)^{-n}\int_{\mathbb{R}^n}e^{ix\cdot\xi}e^{-(Q(t,\xi)-Q(s,\xi))}\mathrm{d}\xi, \ 0<s<t.
\end{align*}
As it is the inverse Fourier transform of an $L^1$-function, we have $p_{t,s}\in C_\infty(\mathbb{R}^n)$. For $t>0$ and $s=0$ we write $p_t$ for $p_{t,0}$, i.e. 
\begin{align}
p_t(x)&=(2\pi)^{-n}\int_{\mathbb{R}^n}e^{ix\cdot\xi}e^{-\int_0^tq(\tau,\xi)\mathrm{d}\tau}\mathrm{d}\xi\\
&=(2\pi)^{-n}\int_{\mathbb{R}^n}e^{ix\cdot\xi}e^{-Q(t,\xi)}\mathrm{d}\xi.\nonumber
\end{align}

\section{On Fundamental Solutions}
Let $q,Q$ and $\mu_{t,s}$ and $p_{t,s}$ be as in Section 2. On the Schwartz space $\mathcal{S}(\mathbb{R}^n)$ we may define the operators
\begin{equation}
q(t,D)u(x):=(2\pi)^{-\frac{n}{2}}\int_{\mathbb{R}^n}e^{ix\cdot\xi}q(t,\xi)\hat{u}(\xi)\,\mathrm{d}\xi
\end{equation}
as well as
\begin{equation}\label{EQ3.2}
H_{t,s}u(x):=\int_{\mathbb{R}^n}u(x-y)\mu_{t,s}(\mathrm{d}y), \ 0\leq s\leq t.
\end{equation}
Applying the convolution theorem, we obtain
\begin{align*}
(H_{t,s}u)^{\wedge}(\xi)&=(u\ast\mu)_{t,s}^{\wedge}(\xi)\\
&=(2\pi)^{\frac{n}{2}}\hat{u}(\xi)\hat{\mu}_{t,s}(\xi)\\
&=e^{-(Q(t,\xi)-Q(s,\xi))}\hat{u}(\xi),
\end{align*}
or
$$
H_{t,s}u(x)=(2\pi)^{-\frac{n}{2}}\int_{\mathbb{R}^n}e^{ix\cdot\xi}e^{-(Q(t,\xi)-Q(s,\xi))}\hat{u}(\xi)\,\mathrm{d}\xi.
$$
We want to study the operators $(H_{t,s})_{0<s<t}$ in $L^2(\mathbb{R}^n)$ and $C_\infty(\mathbb{R}^n)$. The properties of $(\mu_{t,s})_{0\leq s\leq t}$ imply immediately on $\mathcal{S}(\mathbb{R}^n)$
\begin{equation}
H_{s,s}u=u,
\end{equation}
or
\begin{equation}
H_{s,s}=id
\end{equation}
and
\begin{equation}
(H_{t,r}\circ H_{r,s})u=H_{t,r}(H_{r,s}u)=H_{t,s}u,
\end{equation}
or
\begin{equation}
H_{t,r}\circ H_{r,s}=H_{t,s}.
\end{equation}
Moreover, we have
\begin{equation}\label{EQ3.7}
\|H_{t,s}u\|_{\infty}\leq\|u\|_\infty
\end{equation}
and by Plancherel's theorem
\begin{equation}\label{EQ3.8}
\|H_{t,s}u\|_{L^2}\leq \|u\|_{L^2}.
\end{equation}
The weak convergence properties of $(\mu_{t,s})_{0<s<t}$ yield also
\begin{equation}
\lim_{\stackrel{s\to t}{s<t}}\|H_{t,s}u-u\|_\infty=\lim_{\stackrel{t\to s}{s<t}}\|H_{t,s}u-u\|_\infty=0
\end{equation}
and since by Plancherel's theorem
\begin{equation}\label{EQ3.10}
\|H_{t,s}u-u\|_0^2=\int_{\mathbb{R}^n}\left|e^{(Q(t,\xi)-Q(s,\xi))}-1\right|^2|\hat{u}(\xi)|^2\,\mathrm{d}\xi
\end{equation}
we deduce
\begin{equation}
\lim_{\stackrel{s\to t}{s<t}}\|H_{t,s}u-u\|_0=\lim_{\stackrel{t\to s}{s<t}}\|H_{t,s}u-u\|_0=0.
\end{equation}
\begin{lemma}\label{LEM3.1}
For $u\in\mathcal{S}(\mathbb{R}^n)$ and $t>s>0$ we have
\begin{equation}\label{EQ3.12}
\frac{\partial}{\partial t}H_{t,s}u(x)=-q(t,D)H_{t,s}u(x)
\end{equation}
and
\begin{equation}\label{EQ3.13}
\frac{\partial}{\partial s}H_{t,s}u(x)=-H_{t,s}(-q(s,D)u)(x).
\end{equation}
\end{lemma}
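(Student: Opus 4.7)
The plan is to prove both identities by differentiating under the integral sign in the Fourier representation of $H_{t,s}u$, namely
$$H_{t,s}u(x)=(2\pi)^{-n/2}\int_{\mathbb{R}^n}e^{ix\cdot\xi}\,e^{-(Q(t,\xi)-Q(s,\xi))}\,\hat u(\xi)\,\mathrm{d}\xi,$$
and then rewriting the result as a pseudo-differential operator applied to $H_{t,s}u$ or as $H_{t,s}$ applied to $q(s,D)u$. Since $\partial_t Q(t,\xi)=q(t,\xi)$ and $\partial_s Q(s,\xi)=q(s,\xi)$, the formal derivatives of the integrand with respect to $t$ and $s$ are $-q(t,\xi)e^{-(Q(t,\xi)-Q(s,\xi))}\hat u(\xi)$ and $+q(s,\xi)e^{-(Q(t,\xi)-Q(s,\xi))}\hat u(\xi)$, respectively.

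First I would establish the dominated-convergence estimates needed to legitimise this differentiation. The bound $q(\tau,\xi)\le\kappa_1\psi(\xi)$ from \eqref{EQ2.3}, combined with the fact that a continuous negative definite function grows at most like $c(1+|\xi|^2)$, yields $q(\tau,\xi)\le C(1+|\xi|^2)$ uniformly for $\tau$ in any compact subset of $[0,\infty)$. Because $u\in\mathcal{S}(\mathbb{R}^n)$ implies $\hat u\in\mathcal{S}(\mathbb{R}^n)$, the products $q(t,\xi)e^{-(Q(t,\xi)-Q(s,\xi))}|\hat u(\xi)|$ and $q(s,\xi)e^{-(Q(t,\xi)-Q(s,\xi))}|\hat u(\xi)|$ are dominated, locally in $(t,s)$, by an integrable function (even a Schwartz function) of $\xi$. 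This justifies interchanging the derivative with the integral.

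Having done so, for the $t$-derivative I obtain
$$\frac{\partial}{\partial t}H_{t,s}u(x)=-(2\pi)^{-n/2}\int_{\mathbb{R}^n}e^{ix\cdot\xi}\,q(t,\xi)\,e^{-(Q(t,\xi)-Q(s,\xi))}\hat u(\xi)\,\mathrm{d}\xi.$$
Noting that $\widehat{H_{t,s}u}(\xi)=e^{-(Q(t,\xi)-Q(s,\xi))}\hat u(\xi)$, the right-hand side is precisely $-q(t,D)H_{t,s}u(x)$ by the definition of $q(t,D)$ in (3.1), giving \eqref{EQ3.12}. For the $s$-derivative the same calculation gives
$$\frac{\partial}{\partial s}H_{t,s}u(x)=(2\pi)^{-n/2}\int_{\mathbb{R}^n}e^{ix\cdot\xi}\,e^{-(Q(t,\xi)-Q(s,\xi))}\,q(s,\xi)\hat u(\xi)\,\mathrm{d}\xi,$$
and recognising $q(s,\xi)\hat u(\xi)=\widehat{q(s,D)u}(\xi)$ I identify the right-hand side with $H_{t,s}(q(s,D)u)(x)=-H_{t,s}(-q(s,D)u)(x)$, which is \eqref{EQ3.13}.

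The only real obstacle is the dominated-convergence justification, since it requires combining the polynomial growth of $q(\tau,\cdot)$ with the Schwartz decay of $\hat u$; once this is in place, both identities fall out of a direct differentiation under the integral sign together with the elementary calculus identity $\partial_t Q(t,\xi)=q(t,\xi)$ and the standard relation $\widehat{q(\tau,D)v}=q(\tau,\cdot)\hat v$ on $\mathcal{S}(\mathbb{R}^n)$.
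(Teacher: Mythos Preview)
Your proposal is correct and follows essentially the same route as the paper: differentiate under the integral in the Fourier representation of $H_{t,s}u$, use $\partial_tQ(t,\xi)=q(t,\xi)$ and $\partial_sQ(s,\xi)=q(s,\xi)$, and then identify the resulting integrals as $-q(t,D)H_{t,s}u$ and $H_{t,s}(q(s,D)u)$ respectively. The only difference is that you supply an explicit dominated-convergence justification via \eqref{EQ2.3} and the quadratic growth bound for continuous negative definite functions, whereas the paper simply performs the formal differentiation without spelling this out.
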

\begin{proof}
Using the definitions, we obtain for $u\in\mathcal{S}(\mathbb{R}^n)$ and $0<s<t$ that
\begin{align*}
\frac{\partial}{\partial t}H_{t,s}u(x)&=(2\pi)^{-\frac{n}{2}}\int_{\mathbb{R}^n}e^{ix\cdot\xi}\frac{\partial}{\partial t}\left(e^{-(Q(t,\xi)-Q(s,\xi))}\right)\hat{u}(\xi)\,\mathrm{d}\xi\\
&=(2\pi)^{-\frac{n}{2}}\int_{\mathbb{R}^n}e^{ix\cdot\xi}\left(-\frac{\partial}{\partial t}Q(t,\xi)\right)e^{-(Q(t,\xi)-Q(s,\xi))}\hat{u}(\xi)\,\mathrm{d}\xi\\
&=(2\pi)^{-\frac{n}{2}}\int_{\mathbb{R}^n}e^{ix\cdot\xi}(-q(t,\xi))e^{(Q(t,\xi)-Q(s,\xi))}\hat{u}(\xi)\,\mathrm{d}\xi\\
&=-q(t,D)H_{t,s}u(x),\\
\end{align*}
which proves \eqref{EQ3.12}. Further we get
\begin{align*}
\frac{\partial}{\partial s}H_{t,s}u(x)&=(2\pi)^{-\frac{n}{2}}\int_{\mathbb{R}^n}e^{ix\cdot\xi}\left(\frac{\partial}{\partial s}e^{-(Q(t,\xi)-Q(s,\xi))}\right)\hat{u}(\xi)\,\mathrm{d}\xi\\
&=(2\pi)^{-\frac{n}{2}}\int_{\mathbb{R}^n}e^{ix\cdot\xi}e^{-(Q(t,\xi)-Q(s,\xi))}\left(\frac{\partial}{\partial s}Q(s,\xi)\right)\hat{u}(\xi)\,\mathrm{d}\xi\\
&=(2\pi)^{-\frac{n}{2}}\int_{\mathbb{R}^n}e^{ix\cdot\xi}e^{-(Q(t,\xi)-Q(s,\xi))}q(s,\xi)\hat{u}(\xi)\,\mathrm{d}\xi\\
&=-H_{t,s}(-q(s,D)u)(x),
\end{align*}
and the lemma is proved.
\end{proof}
\noindent By \eqref{EQ3.7} we can extend $H_{t,s}$ continuously to $C_\infty(\mathbb{R}^n)$ and by \eqref{EQ3.8} we can extend $H_{t,s}$ continuously to $L^2(\mathbb{R}^n)$. In each case, we will use $H_{t,s}$ to denote the extension. It is clear that \eqref{EQ3.7} and \eqref{EQ3.8}-\eqref{EQ3.10} also hold for the extension. More care is needed for extending Lemma 3.1 to $C_\infty(\mathbb{R}^n)$. The $L^2$-case is however not too difficult to deal with. Using $\psi$ from \eqref{EQ2.3}, we introduce the space
\begin{equation}
H^{\psi,2}(\mathbb{R}^n):=\{v\in L^2(\mathbb{R}^n)\,|\, \|u\|_{\psi,2}<\infty\}
\end{equation}
where
\begin{equation}
\|v\|_{\psi,2}^2=\int_{\mathbb{R}^n}(1+\psi(\xi))^2|\hat{v}(\xi)|^2\,\mathrm{d}\xi.
\end{equation}
The uniformity of estimate \eqref{EQ2.3} with respect to $t$ implies that the operator $(-q(t,D),H^{\psi,2}(\mathbb{R}^n))$ is a closed $L^2$-operator and that \eqref{EQ3.12} as well as \eqref{EQ3.13} hold as equations in $L^2(\mathbb{R}^n)$. In order to interpret this observation, we recall, see \cite{T}:

\begin{definition}
Let $(X,\|\cdot\|_X)$ be a Banach space. Suppose that for every $t>0$ an operator $(A(t),D(A(t)))$ on $X$ is given which for each $t_0>0$ fixed generates a strongly continuous contraction semi-group on $X$. Suppose that $D(A(t))$ is independent of $t$. We call a strongly continuous family $(U(t,s))_{0\leq s\leq t}$, $0\leq s\leq t$, $0\leq t\leq T$, of bounded operators $U(t,s):X\to X$ an $\mathbf{X-}$\textbf{fundamental solution} to the initial value problem
\begin{equation}
\frac{\partial u(t)}{\partial t}=A(t)u(t)=f(t), \ \ 0\leq t\leq T
\end{equation}
and 
\begin{equation}
u(0)=u_0,
\end{equation}
where $u_0\in X$, $u(\cdot)\in D(A(t))$, $f\in C([0,T];X)$, if we have
\begin{align}
&U(t,r)U(r,s)=U(t,s) \ \ \text{for} \ \ 0\leq s\leq r\leq t\leq T;\label{EQ3.18}\\
&U(s,s)=id \ \ \text{for} \ \ 0\leq s\leq T;\\
&\frac{\partial}{\partial t}U(t,s)=-A(t)U(t,s), \ \ 0\leq s\leq t\leq T;
\end{align}
and
\begin{equation}\label{EQ3.21}
\frac{\partial}{\partial s}U(t,s)=U(t,s)A(s), \ \ 0\leq s\leq t\leq T.
\end{equation}
\end{definition}
\noindent Thus, we have by the calculations from the proof of Lemma \ref{LEM3.1},
\begin{theorem}\label{THM3.3}
The family $(H_{t,s})_{0\leq s\leq t\leq T}$ is an $L^2$-fundamental solution to the problem
\begin{equation}
\frac{\partial}{\partial t}u(t,x)+q(t,D)u(t,x)=f(t,x), \ u(0,x)=u_0(x),
\end{equation}
where the domain of $q(t,D)$ is $H^{\psi,2}(\mathbb{R}^n)$, and $\psi$ is taken from \eqref{EQ2.3}.
\end{theorem}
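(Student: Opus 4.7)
The plan is to check one-by-one the four axioms \eqref{EQ3.18}--\eqref{EQ3.21} of Definition 3.2 for $U(t,s):=H_{t,s}$, acting on $X=L^2(\mathbb R^n)$, with $A(t)=q(t,D)$ and domain $D(A(t))=H^{\psi,2}(\mathbb R^n)$. The whole proof can be executed on the Fourier side via Plancherel's theorem, since all operators $H_{t,s}$ are Fourier multipliers with symbol $e^{-(Q(t,\xi)-Q(s,\xi))}$, and the uniform two-sided bound \eqref{EQ2.3} gives the needed domination in $\xi$.

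First, I would dispose of the algebraic and continuity axioms. The semigroup-type identity $H_{t,r}H_{r,s}=H_{t,s}$ and the identity at $s=t$ are already available on $\mathcal S(\mathbb R^n)$ (equations (3.4)--(3.6)) and extend to $L^2$ by the contractivity bound \eqref{EQ3.8} and the density of $\mathcal S(\mathbb R^n)$ in $L^2(\mathbb R^n)$. Strong continuity of $(t,s)\mapsto H_{t,s}u$ at every point of the triangle $\{0\le s\le t\le T\}$ follows from \eqref{EQ3.10} combined with dominated convergence on the Fourier side, using the pointwise continuity of $(t,s)\mapsto Q(t,\xi)-Q(s,\xi)$ and the trivial majorant $4|\hat u(\xi)|^2$.

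The main step is to upgrade Lemma \ref{LEM3.1} from $\mathcal S(\mathbb R^n)$ to $u\in H^{\psi,2}(\mathbb R^n)$ with identities holding in $L^2$. Writing the $t$-difference quotient as a Fourier multiplier, Plancherel gives
\begin{equation*}
\left\|\frac{H_{t+h,s}u-H_{t,s}u}{h}+q(t,D)H_{t,s}u\right\|_0^{2}
=\int_{\mathbb R^n}\left|\frac{e^{-(Q(t+h,\xi)-Q(s,\xi))}-e^{-(Q(t,\xi)-Q(s,\xi))}}{h}+q(t,\xi)e^{-(Q(t,\xi)-Q(s,\xi))}\right|^{2}|\hat u(\xi)|^{2}\,\mathrm{d}\xi.
\end{equation*}
The integrand tends to zero pointwise by the fundamental theorem of calculus applied to $\tau\mapsto Q(\tau,\xi)$, whose derivative is $q(\tau,\xi)\ge 0$. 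To pass to the limit, I dominate the bracket by $2\sup_{\tau\in[t,t+h]}q(\tau,\xi)\,e^{-(Q(t,\xi)-Q(s,\xi))}\le 2\kappa_1\psi(\xi)$ via \eqref{EQ2.3}; multiplying by $|\hat u(\xi)|^2$ gives the integrable majorant $C(1+\psi(\xi))^2|\hat u(\xi)|^2$, which is finite precisely because $u\in H^{\psi,2}(\mathbb R^n)$. Dominated convergence then yields \eqref{EQ3.12} in the $L^2$-sense. Exactly the same scheme, with the sign change coming from $\partial_s Q(s,\xi)=q(s,\xi)$, produces \eqref{EQ3.13} in $L^2$.

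What remains is to verify the standing assumptions of Definition 3.2: for each fixed $t_0>0$, the operator $(-q(t_0,D),H^{\psi,2}(\mathbb R^n))$ is the infinitesimal generator of a strongly continuous contraction semigroup on $L^2(\mathbb R^n)$, with a $t$-independent domain. Closedness and semigroup generation are immediate from multiplication-operator theory: $q(t_0,\cdot)$ is a nonnegative real-valued continuous negative definite function, so $e^{-rq(t_0,\cdot)}$ defines a contraction Fourier multiplier for every $r\ge 0$. The domain $H^{\psi,2}(\mathbb R^n)$ is $t$-independent by the two-sided bound \eqref{EQ2.3}, which yields $\|q(t,D)v\|_0\le\kappa_1\|v\|_{\psi,2}$ and conversely $\|v\|_{\psi,2}\le C(\|v\|_0+\kappa_0^{-1}\|q(t,D)v\|_0)$, uniformly in $t$. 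I expect the only delicate point to be the domination in the $t$-difference quotient argument above, where monotonicity of $Q(\cdot,\xi)$ and the uniform bound \eqref{EQ2.3} have to be combined carefully; everything else is bookkeeping on the Fourier transform side.
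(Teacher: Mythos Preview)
Your proposal is correct and follows essentially the same route as the paper: the paper simply remarks that the uniformity of \eqref{EQ2.3} in $t$ makes $(-q(t,D),H^{\psi,2}(\mathbb R^n))$ a closed $L^2$-operator with $t$-independent domain and lets the identities \eqref{EQ3.12}--\eqref{EQ3.13} from Lemma \ref{LEM3.1} persist in $L^2$, then declares the theorem proved ``by the calculations from the proof of Lemma \ref{LEM3.1}''. Your write-up is in fact more explicit than the paper's, spelling out the dominated-convergence step for the difference quotient (with majorant $C(1+\psi(\xi))^2|\hat u(\xi)|^2$ coming from \eqref{EQ2.3}) that the paper leaves to the reader.
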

\noindent The situation for $C_\infty(\mathbb{R}^n)$ is (as we must expect) more complicated. Using the L\'evy measure $\nu(t,\mathrm{d}y)$ and representation \eqref{EQ3.2}, we can prove that $C_\infty^2(\mathbb{R}^n)\cap C_\infty(\mathbb{R}^n)$ will be in the domain of the generator of the Feller semi-group $(T_t^{q(t_0,\cdot)})_{t\geq 0}$ associated with $q(t_0,\cdot)$ and that this domain is independent of $t$. Then Theorem \ref{THM3.3} can be extended to the case where $L^2(\mathbb{R}^n)$ is replaced by $C_\infty(\mathbb{R}^n)$. For our purposes, it is sufficient to note that by \eqref{EQ2.3} the domain of the generator of $(T_t^{q(t_0,\cdot)})_{t\geq 0}$ is independent of $t_0$ and that $\mathcal{S}(\mathbb{R}^n)$ is a subspace of the domain on which \eqref{EQ3.18}-\eqref{EQ3.21} hold.

\section{On Adjoint Distributions}
We use the notation and assumptions of the previous sections and introduce the probability measures
\begin{equation}\label{EQ4.1}
\rho_t:=\tilde{\rho}(\cdot)\lambda^{(n)}:=\frac{e^{-Q(t,\cdot)}}{(2\pi)^{\frac{n}{2}}p_t(0)}, \ \ t>0.
\end{equation}
From \eqref{EQ4.1} we obtain
\begin{equation}
\hat{\rho}_t(y)=\frac{p_t(y)}{p_t(0)}.
\end{equation}
Our assumptions on $q(t,\cdot)$, in particular, \eqref{EQ2.2} and \eqref{EQ2.3} imply for every $\delta>0$ that
\begin{equation}
\inf_{|\xi|\geq \delta} q(\tau,\xi)\geq \kappa_0\inf_{|\xi|\geq \delta}\psi(\xi)=:M_\delta>0,
\end{equation}
where the last estimate follows from the fact that $\psi(\xi)>0$ for $\xi\neq0$.
\\Following the proof of Lemma 5.6 in \cite{K1}, we find
$$
\int_{|\xi|\geq \delta}e^{-Q(t,\xi)}\,\mathrm{d}\xi=\int_{|\xi|\geq \delta}e^{-\int_0^t q(\tau,\xi)\,\mathrm{d}\tau}\mathrm{d}\xi\leq\int_{|\xi|\geq \delta}e^{-t\kappa_0\psi(\xi)}\mathrm{d}\xi
$$
or for $0<t_0<t$
\begin{equation}\label{EQ4.4}
\int_{|\xi|\geq\delta}e^{-Q(t,\xi)}\,\mathrm{d}\xi\leq e^{-(t-t_0)M_\delta}\int_{|\xi|\geq \delta}e^{-t_0\kappa_0\psi(\xi)}\,\mathrm{d}\xi.
\end{equation}
Since
\begin{equation}
\psi(\xi)\leq C_R^\psi |\xi|^2+a_R^\psi,
\end{equation}
where $C_R^\psi\asymp\int_{|y|\leq R}|y|^2\nu(\mathrm{d}y)$ and $a_R^\psi\asymp\nu_0(B_R^\complement(0))$ it follows that 
\begin{align}
\int_{\mathbb{R}^n}e^{-Q(t,\xi)}\mathrm{d}\xi&=\int_{\mathbb{R}^n}e^{-\int_0^tq(\tau,\xi)\mathrm{d}\tau}\mathrm{d}\xi\geq\int_{\mathbb{R}^n}e^{-t\kappa_1\psi(\xi)}\mathrm{d}\xi\nonumber\\
&\geq \int_{\mathbb{R}^n}e^{-t\kappa_1C_R^\psi|\xi|^2\,\mathrm{d}\xi}e^{-ta_R^\psi},\label{EQ4.6}
\end{align}
here $a\asymp b$ means that $0<\gamma_1\leq\frac{b}{a}\leq\gamma_2$. Combining \eqref{EQ4.4} with \eqref{EQ4.6} we obtain, compare with \cite{K1},
\begin{align*}
\frac{\int_{|\xi|>\delta}e^{-Q(t,\xi)}\mathrm{d}\xi}{(2\pi)^{-\frac{n}{2}}p_t(0)}&\leq \frac{e^{-(t-t_0)M_\delta}\int_{|\xi|>\delta}e^{-t_0\kappa_0\psi(\xi)}\mathrm{d}\xi}{(2\pi)^{-\frac{n}{2}}\int_{\mathbb{R}^n}e^{-t\kappa_1C_R^\psi|\xi|^2}\mathrm{d}\xi e^{-ta_R^\psi}}\\
&=\frac{e^{-(t-t_0)M_\delta}\int_{|\xi|>\delta}e^{-t_0\kappa_0\psi(\xi)}\mathrm{d}\xi}{(2\pi)^{-\frac{n}{2}}t^{-\frac{n}{2}}e^{-ta_R^\psi}\int_{\mathbb{R}^n}e^{-\kappa_1C_R^\psi|\eta|^2}\mathrm{d}\eta}\\
&=t^{\frac{n}{2}}e^{-t(M_\delta-a_R^\psi)}e^{t_0\mu_\delta}\frac{\int_{|\xi|>\delta}e^{-t_0\kappa_0\psi(\xi)}\mathrm{d}\xi}{(2\pi)^{\frac{n}{2}}\int_{\mathbb{R}^n}e^{-\kappa_1C_R^\psi|\eta|^2}\mathrm{d}\eta}.
\end{align*}
We may choose for a given $\delta>0$ the value of $R>0$ such that $M_\delta>a_R^\psi$ and we have proved
\begin{lemma}\label{LEM4.1}
For $\delta>0$ and $t>0$, we have
\begin{equation}
\lim_{t\to\infty}\frac{\int_{|\xi|>\delta}e^{-Q(t,\xi)}\mathrm{d}\xi}{(2\pi)^{-\frac{n}{2}}\int_{\mathbb{R}^n}e^{-Q(t,\xi)}\mathrm{d}\xi}=0.
\end{equation}
\end{lemma}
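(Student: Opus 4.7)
The plan is to read the conclusion essentially off the explicit inequality derived in the paragraph preceding the lemma. Writing $I(t):=\int_{\mathbb{R}^n}e^{-Q(t,\xi)}\,\mathrm{d}\xi$, so that $(2\pi)^{-n/2}I(t)=(2\pi)^{n/2}p_t(0)$, the quantity we must show tends to zero is
$$
R(t):=\frac{\int_{|\xi|>\delta}e^{-Q(t,\xi)}\,\mathrm{d}\xi}{(2\pi)^{-n/2}I(t)},
$$
and the computation above the lemma already yields, for any $0<t_0<t$ and any $R>0$, the bound
$$
R(t)\;\leq\; t^{n/2}\,e^{-t(M_\delta-a_R^\psi)}\,e^{t_0 M_\delta}\,\frac{\int_{|\xi|>\delta}e^{-t_0\kappa_0\psi(\xi)}\,\mathrm{d}\xi}{(2\pi)^{n/2}\int_{\mathbb{R}^n}e^{-\kappa_1 C_R^\psi |\eta|^2}\,\mathrm{d}\eta}.
$$
Thus the whole argument boils down to producing a choice of parameters that makes the $t$-dependent factor on the right decay.

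The first step is to fix $\delta>0$ and choose $R=R(\delta)$ so that $M_\delta-a_R^\psi>0$. Since $a_R^\psi\asymp\nu_0(B_R^\complement(0))$ and $\nu_0$ is a Lévy measure, the tail $\nu_0(B_R^\complement(0))$ tends to $0$ as $R\to\infty$; meanwhile $M_\delta=\kappa_0\inf_{|\xi|\geq\delta}\psi(\xi)$ is a strictly positive constant independent of $R$ (using $\psi(\xi)>0$ for $\xi\neq 0$, $\lim_{|\xi|\to\infty}\psi(\xi)=\infty$, and continuity of $\psi$). So such an $R$ exists. The second step is to fix any $t_0>0$ (say $t_0=1$); the integrals in the numerator and denominator of the prefactor on the right are then finite positive constants, the former because $e^{-t_0\kappa_0\psi}\in L^1(\mathbb{R}^n)$ by the standing hypothesis $e^{-t\psi}\in L^1(\mathbb{R}^n)$, the latter by explicit Gaussian integration.

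The third step is the passage to the limit. With $R$ and $t_0$ fixed, every factor on the right apart from $t^{n/2}e^{-t(M_\delta-a_R^\psi)}$ is a constant, and $t^{n/2}e^{-t(M_\delta-a_R^\psi)}\to 0$ as $t\to\infty$ because $M_\delta-a_R^\psi>0$. Hence $R(t)\to 0$, which is precisely the claim.

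I do not anticipate a serious obstacle: the only non-mechanical point is the choice of $R$, which rests on the Lévy-measure tail estimate $\nu_0(B_R^\complement(0))\to 0$; everything else is assembling factors from inequalities already proved. A minor bookkeeping remark is that the exponent $t_0\mu_\delta$ appearing in the displayed bound above the lemma should read $t_0 M_\delta$, but this is immaterial since the term is a $t$-independent constant.
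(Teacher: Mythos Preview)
Your argument is correct and is exactly the paper's own proof: the estimate displayed immediately before the lemma is the entire content, and the paper concludes just as you do by choosing $R$ so that $M_\delta>a_R^\psi$ and letting $t\to\infty$. You have in fact supplied a bit more detail than the paper (why such an $R$ exists via the L\'evy-measure tail, and why the constant factors are finite), and your observation that $\mu_\delta$ should read $M_\delta$ is a genuine typo in the text.
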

\vskip10pt\noindent Now, for $t>0$ and $\eta\in\mathbb{R}^n$ it follows for $u\in C_\infty(\mathbb{R}^n)$ that
\begin{align*}
\lefteqn{\left|\int_{\mathbb{R}^n}\tilde{\rho}_t(\xi)(u(\eta-\xi)-u(\eta)\,\mathrm{d}\xi\right|}\hskip2cm\\
&\leq \int_{|\xi|\leq\delta}\tilde{\rho}_t(\xi)|u(\eta-\xi)-u(\eta)|\,\mathrm{d}\xi+2\int_{|\xi|>\delta}\tilde{\rho}_t(\xi)\,\mathrm{d}\xi\|u\|_\infty\\
&\leq\sup_{|\xi|\leq\delta}|u(\eta-\xi)-u(\eta)|+2\int_{|\xi|\geq\delta}\tilde{\rho}_t(\xi)\,\mathrm{d}\xi\|u\|_\infty
\end{align*}
and Lemma \ref{LEM4.1} now implies
\begin{lemma}\label{LEM4.2}
\noindent For $u\in C_\infty(\mathbb{R}^n)$ we have
\begin{equation}
\lim_{t\to\infty}\int_{\mathbb{R}^n}\tilde{\rho}_t(\xi)u(\eta-\xi)\,\mathrm{d}\xi=u(\eta).
\end{equation}
\end{lemma}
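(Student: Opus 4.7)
The plan is to finish the estimate already displayed immediately above the statement. Since $\rho_t$ is a probability measure, $\int_{\mathbb{R}^n}\tilde\rho_t(\xi)\,d\xi = 1$, and hence $u(\eta) = \int_{\mathbb{R}^n}\tilde\rho_t(\xi)\,u(\eta)\,d\xi$, so the claim reduces to showing
\[
\lim_{t \to \infty}\int_{\mathbb{R}^n}\tilde\rho_t(\xi)\bigl(u(\eta-\xi) - u(\eta)\bigr)\,d\xi = 0.
\]
The bound derived in the paragraph above the lemma statement, valid for every $\delta > 0$, reads
\[
\left|\int_{\mathbb{R}^n}\tilde\rho_t(\xi)\bigl(u(\eta-\xi) - u(\eta)\bigr)\,d\xi\right| \leq \sup_{|\xi|\leq\delta}|u(\eta-\xi) - u(\eta)| + 2\|u\|_\infty \int_{|\xi|>\delta}\tilde\rho_t(\xi)\,d\xi,
\]
so the task is to make both summands small, the first by choosing $\delta$ appropriately and the second by choosing $t$ large.

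Given $\varepsilon > 0$, I would first invoke the uniform continuity of $u$, which is automatic because every $u \in C_\infty(\mathbb{R}^n)$ is uniformly continuous on $\mathbb{R}^n$, to pick $\delta = \delta(\varepsilon) > 0$ making the first summand strictly less than $\varepsilon/2$. Crucially, this choice is independent of $t$. With $\delta$ now frozen, Lemma \ref{LEM4.1} supplies a threshold $T > 0$ such that for every $t \geq T$ the quantity $\int_{|\xi|>\delta}\tilde\rho_t(\xi)\,d\xi$ is below $\varepsilon/(4\|u\|_\infty + 1)$, keeping the second summand below $\varepsilon/2$. Summing the two estimates yields the bound $\varepsilon$ for all $t \geq T$, which is exactly the limit asserted.

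I do not expect any genuine obstacle: the substantive concentration statement, that $\tilde\rho_t$ loses all of its mass outside any fixed ball around $0$ as $t \to \infty$, is precisely Lemma \ref{LEM4.1}, and the correct splitting of the integral at $|\xi| = \delta$ has already been executed in the preamble to the statement. What is left is the textbook approximate-identity argument, in which positivity and unit total mass of $\tilde\rho_t$ are used to absorb $\int_{|\xi|\leq\delta}\tilde\rho_t \leq 1$ into the supremum, and in which the uniform (rather than merely pointwise at $\eta$) continuity of $u$ is what legitimises choosing $\delta$ once and for all before sending $t \to \infty$.
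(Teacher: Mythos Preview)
Your proposal is correct and follows exactly the paper's own argument: the paper has already carried out the splitting and the estimate in the paragraph preceding the lemma, and then simply writes ``and Lemma~\ref{LEM4.1} now implies'' before stating Lemma~\ref{LEM4.2}. You have merely spelled out the standard $\varepsilon$--$\delta$ details that the paper leaves implicit, including the use of uniform continuity of $u\in C_\infty(\mathbb{R}^n)$ to control the first summand independently of $t$.
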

For $u\in\mathcal{S}(\mathbb{R}^n)$ we define
\begin{equation}
(S_t u)(x):=(\rho_{\frac1t}\ast u)(x)=(2\pi)^{-\frac{n}{2}}\int_{\mathbb{R}^n}e^{ix\xi}(\rho_\frac1t\ast u)^\wedge(\xi)\,\mathrm{d}\xi.
\end{equation}
Since by the convolution theorem
\begin{equation}
(\rho_\frac1t\ast u)^\wedge(\xi)=(2\pi)^{\frac{n}{2}}\hat{\rho}_\frac1t(\xi)\hat{u}(\xi)
\end{equation}
and $\hat{\rho}_\frac1t(\xi)=\frac{p_\frac1t(\xi)}{p_\frac1t(0)}$ we get (at least on $\mathcal{S}(\mathbb{R}^n)$)
\begin{equation}
(S_tu)(x)=(2\pi)^{-\frac{n}{2}}\int_{\mathbb{R}^n}e^{ix\cdot\xi}\frac{p_\frac1t(\xi)}{p_\frac1t(0)}\hat{u}(\xi)\,\mathrm{d}\xi.
\end{equation}
With
\begin{equation}
\sigma_t(\xi):=\frac{p_\frac1t(\xi)}{p_\frac1t(0)}
\end{equation}
we have
\begin{equation}
(S_tu)(x)=(2\pi)^{-\frac{n}{2}}\int_{\mathbb{R}^n}e^{ix\cdot\xi}\sigma_t(\xi)\hat{u}(\xi)\,\mathrm{d}\xi.
\end{equation}
Since $p_{\frac1t}(\xi)\leq p_{\frac1t}(0)$ for $t>0$, our construction yields
\begin{equation}
\|S_tu\|_\infty\leq\|u\|_\infty
\end{equation}
as well as
\begin{equation}
\|S_tu\|_{L^2}\leq\|u\|_{L^2},
\end{equation}
and from Lemma \ref{LEM4.2} and its proof we now deduce
\begin{equation}
\lim_{t\to0}\|S_tu-u\|_\infty=\lim_{t\to\infty}\|S_tv-v\|_{L^2}=0
\end{equation}
for all $u\in C_\infty(\mathbb{R}^n)$ and $v\in L^2(\mathbb{R}^n)$, respectively. We note further that
\begin{align*}
\frac{\partial}{\partial t}\sigma_t(\xi)&=\frac{\partial}{\partial t}\frac{p_\frac1t(\xi)}{p_\frac1t(0)}\\
&=\sigma_t(\xi)\frac{\partial}{\partial t}\ln\sigma_t(\xi)
\end{align*}
we set
\begin{equation}\label{EQ4.17}
A(t,\xi):=-\frac{\partial}{\partial t}\ln\sigma_t(\xi).
\end{equation}
and consider on $\mathcal{S}(\mathbb{R}^n)$ the operator
\begin{equation}
A(t,D)u(x):=(2\pi)^{-\frac{n}{2}}\int_{\mathbb{R}^n}e^{ix\cdot\xi}A(t,\xi)\hat{u}(\xi)\,\mathrm{d}\xi.
\end{equation}
We first observe that
\begin{align*}
\frac{\partial}{\partial t}S_tu(x)&=\frac{\partial}{\partial t}\left((2\pi)^{-\frac{n}{2}}\int_{\mathbb{R}^n}e^{ix\cdot\xi}\frac{p_{\frac1t}(\xi)}{p_{\frac1t}(0)}\hat{u}(\xi)\,\mathrm{d}\xi\right)\\
&=\frac{\partial}{\partial t}\left((2\pi)^{-\frac{n}{2}}\int_{\mathbb{R}^n}e^{ix\cdot\xi}\sigma_t(\xi)\hat{u}(\xi)\,\mathrm{d}\xi\right)\\
&=(2\pi)^{-\frac{n}{2}}\int_{\mathbb{R}^n}e^{ix\cdot\xi}\frac{\partial}{\partial t}(\sigma_t(\xi))\hat{u}(\xi)\,\mathrm{d}\xi\\
&-(2\pi)^{-\frac{n}{2}}\int_{\mathbb{R}^n}e^{ix\cdot\xi}\left(\frac{\partial}{\partial t}\ln\sigma_t(\xi)\right)\sigma_t(\xi)\hat{u}(\xi)\,\mathrm{d}\xi\\
&=-A(t,D)(S_tu)(x),
\end{align*}
or
\begin{equation}
\frac{\partial}{\partial t}S_tu+A(t,D)S_tu=0.
\end{equation}
We now introduce the family of operators $V(t,s)$, $0<s<t$, by
\begin{equation}
(V(t,s)u)^\wedge(\xi)=e^{-\int_s^tA(\tau,\xi)\,\mathrm{d}\tau}\hat{u}(\xi), \ \ u\in \mathcal{S}(\mathbb{R}^n).
\end{equation}
 The condition $A(t,\xi)\geq 0$ will already lead to a satisfactory $L^2$-theory for the operator $V(t,s)$, $0<s<t$. However, since we eventually want to investigate adjoint processes we add here:
\vskip10pt\noindent \textbf{Basic Assumption I.} We assume that for all $t>0$ the function $\xi\mapsto A(t,\xi)$ is a real continuous negative definite function.
\vskip10pt\noindent This is clearly a substantial and restrictive assumption and it is open to characterise those symbols $q(\tau,\xi)$ which eventually will lead to a symbol $A(t,\xi)$ satisfying this assumption. Non-trivial examples will be provided in Section 6.
\vskip10pt\noindent Under Basic Assumption I, it follows that $e^{-\int_s^tA(\tau,\xi)\,\mathrm{d}\tau}$ is a positive definite function in the sense of Bochner, hence by
\begin{equation}\label{EQ4.21}
\hat{\gamma}_{t,s}(\xi):=(2\pi)^{-\frac{n}{2}}e^{-\int_s^tA(\tau,\xi)\,\mathrm{d}\tau}
\end{equation}
a family of probability measures $\gamma_{t,s}, 0<s<t$ is defined. From \eqref{EQ4.21} we deduce immediately
\begin{align}
\gamma_{s,s}=\epsilon_0, \ \ 0\leq s;\label{EQ4.22}\\
\gamma_{t,r}\ast\gamma_{r,s}=\gamma_{t,s}, \ \ 0<s<r<t;\\
\gamma_{t,s}\to\epsilon_0 \ \text{ weakly for } \ s\to t, s<t;\\
\gamma_{t,s}\to\epsilon_0 \ \text{ weakly for } \ t\to s, s<t. \label{EQ4.25}
\end{align}
Following \cite{S1}, Theorem 9.7, we can associate with $(\gamma_{t,s})_{0<s<t<\infty}$ a canonical additive process in law with state space $\mathbb{R}^n$. Thus we have proved
\begin{theorem}
Let $q:[0,\infty):\mathbb{R}^n\to\mathbb{R}$ and $\psi:\mathbb{R}^n\to\mathbb{R}$ satisfying the assumptions of Section 2 and suppose that $A(t,\xi)$ defined by \eqref{EQ4.17} fulfils Basic Assumption I. Then we can associate with $q(t,\xi)$ an additive process in law $(Y_t)_{t\geq 0}$ and with $A(t,\xi)$ we can associate an additive process in law $(X_t)_{t\geq0}$. The distributions of the increments are given by
\begin{equation}
P_{Y_t-Y_s}=\mu_{t,s}
\end{equation}
and
\begin{equation}
P_{X_t-X_s}=\gamma_{t,s}.
\end{equation}
\end{theorem}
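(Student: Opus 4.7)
The plan is to assemble results already established in Sections 2 and 4 and then to invoke Theorem 9.7 of \cite{S1} twice, once for each family of increment distributions. No new estimates are needed, only the observation that what has been verified piecewise suffices to apply Sato's existence theorem for additive processes.

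For the construction of $(Y_t)_{t\geq 0}$, I would collect from Section 2 the facts that $(\mu_{t,s})_{0\leq s\leq t}$ is a family of probability measures whose Fourier transforms are $\hat{\mu}_{t,s}(\xi)=(2\pi)^{-n/2}e^{-(Q(t,\xi)-Q(s,\xi))}$, and that it satisfies the four hypotheses of Sato's theorem: $\mu_{s,s}=\epsilon_0$, the Chapman--Kolmogorov identity $\mu_{t,r}\ast\mu_{r,s}=\mu_{t,s}$, and the two weak-continuity properties at the diagonal. Theorem 9.7 of \cite{S1} then produces a canonical additive process in law $(Y_t)_{t\geq 0}$ whose increment distributions coincide with $\mu_{t,s}$.

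The construction of $(X_t)_{t\geq 0}$ is parallel but hinges on Basic Assumption I. The key preliminary step is to verify that, for every $0<s<t$, the function $\xi\mapsto\int_s^tA(\tau,\xi)\,\mathrm{d}\tau$ is itself a continuous negative definite function. Negative definiteness follows from the fact that continuous negative definite functions form a convex cone closed under pointwise Riemann integration against a positive weight; continuity in $\xi$ follows by dominated convergence once one has a local bound of the form $|A(\tau,\xi)|\leq C(1+\psi(\xi))$, obtainable from \eqref{EQ2.3} together with the definition \eqref{EQ4.17}. Consequently $\xi\mapsto e^{-\int_s^tA(\tau,\xi)\,\mathrm{d}\tau}$ is positive definite in the sense of Bochner, so \eqref{EQ4.21} defines a probability measure $\gamma_{t,s}$, and the properties \eqref{EQ4.22}--\eqref{EQ4.25} have already been recorded immediately after its definition. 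A second application of Theorem 9.7 of \cite{S1} now yields $(X_t)_{t\geq 0}$ with $P_{X_t-X_s}=\gamma_{t,s}$.

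The only real obstacle, should one wish to make the argument rigorous in full detail, is precisely the verification that $\xi\mapsto\int_s^tA(\tau,\xi)\,\mathrm{d}\tau$ is continuous negative definite rather than merely pointwise negative definite. This rests on joint measurability and suitable local integrability of $(\tau,\xi)\mapsto A(\tau,\xi)$, which in turn must be read off from the smoothness of $Q(t,\xi)$ in $t$ inherited from $q(t,\cdot)$ through the definitions of $\sigma_t$ and \eqref{EQ4.17}. Beyond that technical point the theorem is a direct corollary of the preparatory work of the preceding sections and of Sato's general existence result.
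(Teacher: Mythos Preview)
Your proposal is correct and follows essentially the same route as the paper: the argument for $(Y_t)_{t\geq 0}$ is exactly the content of Section~2, and for $(X_t)_{t\geq 0}$ the paper, like you, simply observes that Basic Assumption~I makes $e^{-\int_s^tA(\tau,\xi)\,\mathrm{d}\tau}$ positive definite, records \eqref{EQ4.22}--\eqref{EQ4.25}, and invokes Theorem~9.7 of \cite{S1}. Your additional remarks on why $\xi\mapsto\int_s^tA(\tau,\xi)\,\mathrm{d}\tau$ is continuous negative definite and on the underlying measurability issues are details the paper passes over in silence, so you are in fact slightly more careful than the original.
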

\begin{definition}
We call $(Y_t)_{t\geq 0}$ and $(X_t)_{t\geq 0}$ a pair of \textbf{adjoint additive processes} in law.
\end{definition}
\par\noindent Using \eqref{EQ4.22}-\eqref{EQ4.25}, or directly \eqref{EQ4.21}, it is straightforward to see that we can extend $(V(t,s))_{0<s<t}$ as an $X$-fundamental solution to $-A(t,D)$ for $X\in\{C_\infty(\mathbb{R}^n, L^2(\mathbb{R}^n)\}$. However, even in the case $X=L^2(\mathbb{R}^n)$ it is not obvious how to characterise $D(A(t))$ in terms of $\psi$, one of the data characterising our construction. 

\section{Some Geometric Interpretations of the Densities}

The measures $\mu_{t,s}$ and $\gamma_{t,s}$ have densities with respect to the Lebesgue measure, indeed we have
\begin{equation}
P_{Y_t-Y_s}=\mu_{t,s}=F^{-1}\left(e^{-(Q(t,\cdot)-Q(s,\cdot))}\right)\lambda^{(n)}=p_{t,s}(\cdot)\lambda^{(n)}
\end{equation}
and
\begin{align}
P_{X_t-X_s}=\gamma_{t,s}&=F^{-1}\left((2\pi)^{-\frac{n}{2}}e^{-\int_s^tA(\tau,\xi)\,\mathrm{d}\tau}\right)\lambda^{(n)}\nonumber\\
&=F^{-1}\left((2\pi)^{-\frac{n}{2}}e^{-\int_s^t\frac{\partial}{\partial\tau}\ln\sigma_\tau(\xi)\,\mathrm{d}\tau}\right)\lambda^{(n)}\nonumber\\
&=F^{-1}\left((2\pi)^{-\frac{n}{2}}e^{\ln\sigma_t(\cdot)-\ln\sigma_s(\cdot)}\right)\lambda^{(n)}\nonumber\\
&=F^{-1}\left((2\pi)^{-\frac{n}{2}}\frac{p_\frac1t(\cdot)}{p_\frac1t(0)}\cdot\frac{p_\frac1s(0)}{p_\frac1s(\cdot)}\right)\lambda^{(n)}\label{EQ5.2}
\end{align}
Some care is needed with \eqref{EQ5.2}. Since by Basic Assumption I $\int_s^tA(\tau,\xi)\,\mathrm{d}\tau$ is a continuous negative definite function, it follows that $\int_s^tA(\tau,\xi)\,\mathrm{d}\tau\geq 0$ and at least in the sense of $\mathcal{S}'(\mathbb{R}^n)$ we can calculate the inverse Fourier transform of $e^{-\int_s^tA(\tau,\xi)\,\mathrm{d}\tau}$. In fact we know more, namely that $e^{-\int_s^t A(\tau,\xi)\,\mathrm{d}\tau}$  is a positive definite function. Thus \eqref{EQ5.2} is justified. However, while we can guarantee that $\frac{p_\frac1t(\cdot)}{p_\frac1t(0)}$ belongs to $L^1(\mathbb{R}^n)$, we cannot a priori guarantee  that  $\frac{p_\frac1s(0)}{p_\frac1s(\cdot)}$ belongs to $\mathcal{S}'(\mathbb{R}^n)$, and we cannot a priori apply the convolution theorem to \eqref{EQ5.2}. 
\\For the case $s=0$, however, we obtain
$$
\mu_t:=P_{Y_t-Y_0}=\mu_{t,0}=F^{-1}\left(e^{-Q(t,\cdot)}\right)=p_t(\cdot)\lambda^{(n)}
$$
and using a consequence of Lemma \ref{LEM4.2}, namely that $\lim_{s\to0}\sigma_\frac1s=1,$ we obtain
\begin{align}
\gamma_t:=P_{X_t-X_0}=\gamma_{t,0}&=F^{-1}\left((2\pi)^{-\frac{n}{2}}e^{-\int_0^tA(\tau,\xi)\,\mathrm{d}\tau}\right)\lambda^{(n)}\\
&=F^{-1}\left((2\pi)^{-\frac{n}{2}}e^{\ln\sigma_t(\cdot)}\right)\lambda^{(n)}=\frac{1}{(2\pi)^{\frac{n}{2}}}F^{-1}(\sigma_t(\cdot))\lambda^{(n)}\nonumber\\
&=\frac{1}{(2\pi)^{\frac{n}{2}}}F^{-1}\left(\frac{p_\frac1t(\xi)}{p_\frac1t(0)}\right)\lambda^{(n)},\nonumber
\end{align}
i.e. 
\begin{equation}
\gamma_t=\Phi_t(\cdot)\lambda^{(n)}:=\frac{e^{-Q(\frac1t,\cdot)}}{(2\pi)^{\frac{n}{2}}p_{\frac1t}(0)}\lambda^{(n)}.
\end{equation}
Our aim is to give geometric interpretations for $p_t$ as well as for $\Phi_t$ and for this we follow closely the ideas of \cite{B4} which are based on \cite{J2}. For this we add:
\vskip10pt\noindent\textbf{Basic Assumption II.} For the continuous negative definite function $\psi$ from \eqref{EQ2.3} by $\mathrm{d}_\psi(\xi,\eta):=\sqrt{\psi(\xi-\eta)}$ a metric is defined on $\mathbb{R}^n$ which generates the Euclidean topology. Moreover, we assume that $(\mathbb{R}^n,\mathrm{d}_\psi,\lambda^{(n)})$ has the \textbf{volume doubling property}, i.e. 
\begin{equation}
\lambda^{(n)}(B^{\mathrm{d}_\psi}(x,2r))\leq c_0\lambda^{(n)}(B^{\mathrm{d}_\psi}(x,r))
\end{equation} 
for all $x\in\mathbb{R}^n$ and $r>0$ where $B^{\mathrm{d}_\psi}(x,r)=\{y\in\mathbb{R}^n|\mathrm{d}_\psi(x,y)<r\}$ is the open ball with respect to $\mathrm{d}_\psi$ with centre $x$ and radius $r$.
\vskip10pt\noindent Note that if $\psi:\mathbb{R}^n\to\mathbb{R}$ is a continuous negative definite function such that $\psi(\xi)=0$ if and only if $\xi=0$, then $\mathrm{d}_\psi$ is always a metric on $\mathbb{R}^n$. In \cite{J2}, in particular Lemma 3.2, conditions are proved for $\mathrm{d}_\psi$ to generate the Euclidean topology, and the volume doubling property of $\mathrm{d}_\psi$ is discussed in more detail.
\vskip10pt\noindent Since in \eqref{EQ2.3} we can replace $\psi$ by $q(t_0,\cdot)$ for a fixed $t_0>0$ (with a change of the constants $\kappa_0$ and $\kappa_1$), we can transfer the results of Section 4 in \cite{B4}. Thus, it follows that under Basic Assumption II with
\begin{equation}
Q_{t,s}(\xi)=\int_s^tq(\tau,\xi)\,\mathrm{d}\tau
\end{equation}
a new metric is given by
\begin{equation}
d_{Q_{t,s}}(\xi,\eta):=Q_{t,s}^{\frac12}(\xi-\eta), \ \ 0\leq s<t
\end{equation}
and this metric generates the Euclidean topology on $\mathbb{R}^n$ and has the volume doubling property. This applies, in particular, to $d_{Q_{t,0}}$. The proof of Theorem 4.1 in \cite{B4}, compare also with Theorem 4.1 in \cite{J2}, yields under Basic Assumption I and Basic Assumption II that
\begin{equation}
p_{t,s}(0)=(2\pi)^{-n}\int_{\mathbb{R}^n}\lambda^{(n)}(B^{\mathrm{d}_{Q_{t,s}}}(0,\sqrt{r}))e^{-r}\,\mathrm{d}r
\end{equation}
and using the volume doubling property, as well as \eqref{EQ2.3}, we get
\begin{equation}
p_{t,s}(0)\asymp\lambda^{(n)}(B^{\mathrm{d}_{Q_{t,s}}}(0,\sqrt{\tfrac{\kappa_1}{\kappa_0}})).
\end{equation}
We now consider the case $s=0$ and write $p_t=p_{t,0}$ etc. It follows that
\begin{align*}
p_t(x)&=p_t(0)\frac{p_t(x)}{p_t(0)}=p_t(0)e^{\ln\left(\frac{p_t(x)}{p_t(0)}\right)}\\
&=p_t(0)e^{-(-\ln\sigma_{\frac{1}{t}}(x))}=p_t(0)e^{-((-\ln\sigma_\frac1t(x))^\frac12)^2}
\end{align*}
and by our assumptions, for $t>0$ fixed, a metric is given by
\begin{equation}
\delta_{Q_{t,0}}(x,y):=(-\ln\sigma_{\frac1t}(x-y))^\frac12
\end{equation}
which allows us to write 
\begin{equation}
p_t(x-y)=p_t(0)e^{-\delta^2_{Q_{t,0}}(x,y)}
\end{equation}
with $p_t(0)\asymp\lambda^{(n)}(B^{\mathrm{d}_{Q_{t,0}}}(0,\sqrt{\tfrac{\kappa_1}{\kappa_0}}))$. On the other hand we have
\begin{equation}
\Phi_t(x)=\Phi_t(0)\frac{\Phi_t(x)}{\Phi_t(0)}=\Phi_t(0)e^{-Q_{1/ t,0}(x,0)}
\end{equation}
or
\begin{equation}
\Phi_t(x-y)=\Phi_t(0)e^{-\mathrm{d}^2_{Q_{1/ t,0}}(x,y)}.
\end{equation}
For $\Phi_t(0)$ we have
\begin{equation}
\Phi_t(0)=(2\pi)^{-n}\int_{\mathbb{R}^n}e^{-\int_0^tA(\tau,\xi)\,\mathrm{d}\tau}\mathrm{d}\xi,
\end{equation}
but
\begin{equation}
\ln\sigma_t(\xi)=-\int_0^tA(\tau,\xi)\,\mathrm{d}\tau.
\end{equation}
It follows from the definition of $\sigma_t$ that we can write
\begin{equation}
\Phi_t(0)=(2\pi)^{-n}\int_{\mathbb{R}^n}e^{-(-\ln\sigma_t(\xi))}\mathrm{d}\xi
\end{equation}
and $-\ln\sigma_t$ is the square of a metric, namely $-\ln\sigma_t=\delta^2_{Q_{1/t,0}}$. We can now use the arguments in \cite{B4} to obtain
\begin{equation}
\Phi_t(0)=(2\pi)^{-n}\int_0^\infty\lambda^{(n)}(B^{\delta_{Q_{1/t,0}}}(0,\sqrt{r}))e^{-r}\,\mathrm{d}r
\end{equation}
and eventually we have the dual formulae
\begin{equation}\label{EQ5.18}
p_t(x-y)=(2\pi)^{-n}\int_{\mathbb{R}^n}\lambda^{(n)}(B^{\mathrm{d}_{Q_{t,0}}}(0,\sqrt{r}))e^{-r}\,\mathrm{d}r\, e^{-\delta^2_{Q_{t,0}}(x,y)}
\end{equation}
and
\begin{equation}\label{EQ5.19}
\Phi_t(x-y)=(2\pi)^{-n}\int_{\mathbb{R}^n}\lambda^{(n)}(B^{\delta_{Q_{1/t,0}}}(0,\sqrt{r}))e^{-r}\,\mathrm{d}r\, e^{-\mathrm{d}^2_{Q_{1/t,0}}(x,y)}.
\end{equation}
Thus, under our assumptions of Section 2, Basic Assumptions I and II and the assumption that $p_t$ is unimodal, we obtain for the two additive processes $(Y_t)_{t\geq 0}$ generated by $-q(t,D)$ and $(X_t)_{t\geq 0}$ generated by $-A(t,D)=\left(\frac{\partial}{\partial t}\ln\sigma_t\right)(D)$ the dual formulae \eqref{EQ5.18} and \eqref{EQ5.19} for the transition densities of $Y_t$ and $X_t$ respectively. 

\section{Examples}

\begin{example}
In this example we consider the case where $Q(t,\xi)=h(t)|\xi|^2$, $h(t)> 0$ for $t>0$, $h(0)=0$ and for $h$ strictly increasing. We first consider the transition densities $p_{t,0}(x)$ for $t>0$,
\begin{align*}
p_{t,0}(x)&=(2\pi)^{-n}\int_{\mathbb{R}^n}e^{ix\cdot\xi}e^{-h(t)|\xi|^2}\,\mathrm{d}\xi\\
&=\frac{1}{(4\pi h(t))^\frac{n}{2}}e^{-\frac{|x|^2}{4h(t)}}.
\end{align*}
Now, for the adjoint process we find using the fact that $h(1/t)\geq 0$ and that $t\mapsto h(1/t)$ is strictly decreasing that,
\begin{align*}
\Phi_t(x)&=(2\pi)^{-n}\int_{\mathbb{R}^n}e^{ix\cdot\xi}\frac{p_{\frac1t}(\xi)}{p_{\frac1t}(0)}\,\mathrm{d}\xi\\
&=(2\pi)^{-n}\int_{\mathbb{R}^n}e^{ix\cdot\xi}e^{\frac{-|\xi|^2}{4h(1/t)}}\,\mathrm{d}\xi\\
&=\pi^{-\frac{n}{2}}(h(1/t))^{\frac{n}{2}}e^{-|x|^2h(1/t)}.
\end{align*}
\end{example}
\begin{example}
We next consider the case where $Q(t,\xi)=h(t)|\xi|$, again where $h(t)> 0$ for $t>0$, $h(0)=0$, $h$ is strictly increasing. The transition densities for $t>0$ are given by,
\begin{align*}
p_{t,0}(x)&=(2\pi)^{-n}\int_{\mathbb{R}^n}e^{ix\cdot\xi}e^{-h(t)|\xi|}\,\mathrm{d}\xi\\
&=(\pi)^{\frac{-n-1}{2}}\Gamma\left(\frac{n+1}{2}\right)\frac{h(t)}{((h(t))^2+|\frac{x}{h(t)}|^2)^\frac{n+1}{2}}
\end{align*}

\vskip10pt\noindent Then for the adjoint we get,
\begin{align*}
\Phi_t(x)&=(2\pi)^{-n}\int_{\mathbb{R}^n}e^{ix\cdot\xi}\frac{p_{\frac1t}(\xi)}{p_{\frac1t}(0)}\,\mathrm{d}\xi\\
&=(2\pi)^{-n}\int_{\mathbb{R}^n}e^{ix\cdot\xi}\frac{(h(1/t))^{n+1}}{((h(1/t))^2+|\frac{\xi}{h(1/t)}|^2)^{\frac{n+1}{2}}}\,\mathrm{d}\xi\\
&=(2\pi)^{-\frac{n}{2}}F^{-1}\left(\frac{(h(1/t))^{n+1}}{((h(1/t))^2+|\frac{\xi}{h(1/t)}|^2)^{\frac{n+1}{2}}}\right)\\
&=\frac{2^{-\frac{n}{2}}(2\pi)^{-\frac{n}{2}}\sqrt{\pi}(h(1/t))^n}{\Gamma(\frac{n+1}{2})}e^{-h(1/t)|x|}.
\end{align*}

\end{example}

\begin{example}
Here we consider the case where $\xi$ belongs to $\mathbb{R}$, i.e. $n=1$, and $Q(t,\xi)=h(t)\ln\cosh\xi$, $h(t)> 0$ for $t>0$, $h(0)=0$ and for $h$ strictly increasing. The transition densities for $t>0$ are given by,
\begin{align*}
p_{t,0}(x)&=(2\pi)^{-n}\int_{\mathbb{R}^n}e^{ix\cdot\xi}e^{-h(t)\ln\cosh\xi}\,\mathrm{d}\xi\\
&=\frac{1}{2\pi}\int_{\mathbb{R}}e^{ix\cdot\xi}\left(\frac{1}{\cosh\xi}\right)^{h(t)}\,\mathrm{d}\xi\\
&=\frac{1}{2\pi}\int_{\mathbb{R}}e^{ix\cdot\xi}\frac{2^{h(t)}e^{-h(t)\xi}}{(1+e^{-2\xi})^{h(t)}}\,\mathrm{d}\xi\\
&=\frac{1}{2\pi}2^{h(t)-1}\int_{\mathbb{R}}\frac{2e^{-2q(t,x)\xi}}{(1+e^{-2\xi})^{p(t,x)+q(t,x)}}\,\mathrm{d}\xi,\\
\end{align*}
where
$$
q(x,t)=\frac{h(t)-ix}{2},\quad\quad p(x,t)=\frac{h(t)+ix}{2}
$$
and 
$$
p+q=h(t).
$$
Then,
\begin{align*}
p_{t,0}(x)&=\frac{1}{2\pi}2^{h(t)-1}\int_{\mathbb{R}}\frac{2(e^{-2\xi})^q}{(1+e^{-2\xi})^{p+q}}\\
&=\frac{1}{2\pi}2^{h(t)-1}\int_0^1 u^{p-1}(1-u)^{q-1}\,\mathrm{d}u\\
&=\frac{1}{2\pi}2^{h(t)-1}B(p,q)\\
&=\frac{1}{2\pi}2^{h(t)-1}B\left(\frac{h(t)+ix}{2},\frac{h(t)-ix}{2}\right)\\
&=\frac{2^{h(t)-2}}{\pi}\left|\Gamma\left(\frac{h(t)+ix}{2}\right)\right|^2.
\end{align*}
In summary,
\begin{align*}
p_{t,0}(x)&=\frac{2^{h(t)-2}}{\pi}\left|\Gamma\left(\frac{h(t)+ix}{2}\right)\right|^2,\\
p_{t,0}(0)&=\frac{2^{h(t)-2}}{\pi}\left|\Gamma\left(\frac{h(t)}{2}\right)\right|^2
\end{align*}
and
$$
\delta^2_t(x,0)=-\ln\left|\frac{\Gamma\left(\frac{h(t)+ix}{2}\right)}{\Gamma\left(\frac{h(t)}{2}\right)}\right|^2=\sum_{j=1}^\infty\ln\left(1+\frac{x^2}{(h(t)+2j)^2}\right).
$$
Our calculation made use of the one in \cite{P} where the case $q(\xi)=\ln\cosh\xi$ was treated. Further, we note that $A(t,\xi):=\sum_{j=1}^\infty\ln\left(1+\frac{x^2}{(h(1/t)+2j)^2}\right)$ fulfils our basic assumptions for $t>0$.
\end{example}

\begin{remark}
We may also combine the previous examples to form new examples, for example, we could consider
$$
Q(t,\xi,\eta)=h_1(t)|\xi|^2+h_2(t)|\eta|,
$$
where $h_i(t)> 0$ for $t>0$, $h_i(0)=0$ and for $h_i$ strictly increasing, $i=1,2$.

\end{remark}

\begin{remark}
In the case of a L\'evy process, the symbol, i.e. the characteristic exponent, can be used to obtain results with direct probabilistic interpretations, e.g. estimates for passage times. Results of this type had been extended to Feller processes generated by pseudo-differential operators with state space dependent symbols, see R. Schilling \cite{S2A}. In \cite{J3} it was pointed out that with the help of the metric $d_\psi(\xi,\eta)=\psi^\frac12(\xi-\eta)$ these results admit a geometric interpretation. For additive processes we are not aware of explicit results of this type, however by a standard procedure we can consider additive processes with state space $\mathbb{R}^n$ as time-homogeneous Markov processes with state space $\mathbb{R}^{n+1}$, see for example in the context of pseudo-differential operators the work \cite{B2}. Hence a transfer obtained for L\'evy processes to certain additive processes should be possible, but we do not want to follow up this idea here. 

\end{remark}

\vskip10pt\noindent E-mail addresses:
\vskip10pt N.Jacob@Swansea.ac.uk
\vskip5pt K.Evans@Swansea.ac.uk

\end{document}